\newtheorem{theorem}{Theorem}[section]
\newtheorem{lemma}[theorem]{Lemma}
\newtheorem{definition}[theorem]{Definition}
\newtheorem{q}[theorem]{Question}
\newtheorem{p}[theorem]{Problem}
\title{A new infinite family of non-abelian strongly real Beauville $p$-groups for every odd prime $p$}
\author{Ben Fairbairn}
\address{Ben Fairbairn, Department of Economics, Mathematics and Statistics, Birkbeck, University of London, Malet Street, London WC1E 7HX, United Kingdom}
\email{b.fairbairn@bbk.ac.uk}
\begin{document}
\maketitle

\begin{abstract}
We prove that there exist infinitely many a non-abelian strongly real Beauville $p$-group for every prime $p$. Previously only finitely many in the case $p=2$ have been constructed.
\end{abstract}

\section{Introduction}\label{Sec1}
We begin with our main definition.

\begin{definition}\label{MainDef}
Let $G$ be a finite group and for $g,h\in G$ let
\[
\Sigma(g,h):=\bigcup_{i=1}^{|G|}\bigcup_{k\in G}\{(g^i)^k,(h^i)^k,((gh)^i)^k\}.
\]

%\begin{itemize}
%\item[($i$)]
A set of elements $\{\{x_1,y_1\},\{x_2,y_2\}\}\subset G\times G$ is a
\emph{Beauville structure of} $G$ if and only if $\langle
x_1,y_1\rangle=\langle x_2,y_2\rangle=G$ and
\begin{equation}
\Sigma(x_1,y_1)\cap\Sigma(x_2,y_2)=\{e\}.\tag{$\dagger$}
\end{equation}

If $G$ has a
Beauville structure then we call $G$ a \emph{Beauville
group}.

 Let $G$ be a Beauville group and let $X =\{\{x_1, y_1\},\{x_2, y_2\}\}$ be a
Beauville structure for $G$. We say that $G$ and $X$ are \emph{strongly real} if there exists an
automorphism $\phi\in\mbox{Aut}(G)$ and elements $g_i\in G$ for $i = 1, 2$
such that
\begin{equation}
g_i\phi(x_i)g_i^{-1}=x_i^{-1}\mbox{ and }g_i\phi(y_i)g_i^{-1}=y_i^{-1}\tag{$\star$}
\end{equation}
for $i=1,2$.
\end{definition}

Beauville groups were originally introduced in connection with a class of complex surfaces of general type known as Beauville surfaces. These surfaces posses many useful geometric properties: their automorphism groups and fundamental groups are relatively easy to compute and these surfaces are rigid in the sense of admitting no non-trivial deformations and are thus isolated points in the moduli space of surfaces of general type. They provide cheap counterexamples and are a useful testing ground for conjectures. What makes these surfaces so easy to work with is the fact that doing so boils down to working inside the corresponding Beauville group and structure.

For $p\geq5$ abelian strongly real Beauville $p$-groups are easy to construct: writing $C_n$ for the cyclic group of order $n$, in \cite[Section 3]{C} Catanese classified the abelian Beauville groups proving that they are precisely the groups $C_n\times C_n$ where $n>1$ is coprime to 6. Consequently, if $p\geq5$ we can construct infinitely many abelian strongly real Beauville $p$-groups by simply setting $n$ to be a power of $p$ (though Catanese's result also tells us that there are no abelian Beauville 2-groups or 3-groups.) Non-abelian examples are much harder to construct. As far as the author is aware the only previously published examples of non-abelian strongly real Beauville $p$-groups is a pair of 2-groups constructed by the author in \cite[Section 7]{MoreF}. There is sound reason for believing the case of $p$ odd is harder. In \cite{HM} Helleloid and Martin prove that automorphism group of a finite $p$-group is almost always a $p$-group. In particular, if $p$ is odd, then typically no automorphism like the $\phi$ in condition ($\star$) of Definition \ref{MainDef} exists since such an automorphism must necessarily have even order. Even ignoring the strongly real condition, Beauville $p$-groups are more difficult to construct in general --- a commonly used trick for showing that condition $(\dagger)$ of Definition \ref{MainDef} is satisfied is to find a Beauville structure such that $o(x_1)o(y_1)o(x_1y_1)$ is coprime to $o(x_2)o(y_2)o(x_2y_2)$ but this clearly cannot be done in a $p$-group since every non-trivial element has an order that's a power of $p$. Further motivation comes from the fact that in some sense `most' finite groups are $p$-groups \cite{obrian1,obrian2} and thus establishing the general picture in this case a long way to establishing the wider picture in general. Despite the above difficulties, a number of authors have found a variety of ingenious constructions for them \cite{BBF,BBPV1,BBPV2,MoreF,new,FGJ,Gul1,JW,SV}. Our main result is as follows.

\begin{theorem}\label{MainThm}
Let $p$ be an odd prime and let $q$ and $r$ be powers of $p$. If $q$ and $r$ are sufficiently large, then groups $C_q\wr C_r/Z(C_q\wr C_r)$ are strongly real Beauville groups.% Furthermore there also exist infinitely many strongly real Beauville $2$-groups.
\end{theorem}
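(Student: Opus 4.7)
Write $W = C_q\wr C_r = B\rtimes\langle t\rangle$ with base $B = C_q^r$ and $t$ acting by cyclic shift; for $r>1$ the centre of $W$ is the diagonal subgroup $\Delta = \{(c,\ldots,c): c\in C_q\}$, so $G = W/\Delta$ has order $q^{r-1}r$. Write $a$ for the image in $G$ of a generator of the first $B$-coordinate, so $a$ and $t$ generate $G$. The construction's one real insight is that $W$ carries the order-two automorphism
\[
\phi\bigl((v_0,v_1,\ldots,v_{r-1}),t^k\bigr) := \bigl((-v_0,-v_{r-1},-v_{r-2},\ldots,-v_1),t^{-k}\bigr),
\]
whose homomorphism property and involutivity are routine semidirect-product identities; since $\phi$ preserves $\Delta$ it descends to $G$. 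This is what sidesteps the Helleloid--Martin obstruction cited in the introduction: a generic $p$-group admits no involutory automorphism, but this specific $W$ does.

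As candidate Beauville pairs I propose
\[
(x_1,y_1) = (at,\,at^{-1}),\qquad (x_2,y_2) = (at^s,\,at^{-s})
\]
for an integer $s$ coprime to $r$ whose precise value will be dictated by the Beauville condition below. Generation in $G$ follows from $y_i^{-1}x_i \in \{t^2,t^{2s}\}$ together with $r$ odd, and then $a = x_i t^{-s_i}$. A short calculation yields the closed form $(at^m)^n = (v_{m,n},t^{nm})$ where $v_{m,n}$ carries $a_0$ at positions $0,m,2m,\ldots,(n-1)m\bmod r$ and zeros elsewhere; from this $x_i,y_i$ have order $r$ in $G$ and $x_iy_i \in B$ has order $q$. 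For strong reality, $\phi(at^m) = (-a,t^{-m})$ and $(at^m)^{-1} = (-t^{-m}\cdot a,t^{-m})$, and a direct check gives $a\,\phi(at^m)\,a^{-1} = (at^m)^{-1}$ in $W$; the symmetric shape of $y_i$ ensures the same $g_i = a$ simultaneously inverts $\phi(y_i)$, so condition $(\star)$ is verified with $g_1 = g_2 = a$.

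The main obstacle, as is always the case for $p$-group Beauville constructions, is establishing $(\dagger)$. Coprimality of orders is of no use inside a $p$-group, so I would exploit two $G$-conjugacy invariants extractable from any element: the projection $\pi(z)\in C_r$, and the ``trace'' $\mathrm{tr}(z)\in C_q/(rC_q)$ given by the sum of the $B$-coordinates (well-defined modulo the contribution of $\Delta$, which shifts the sum by multiples of $r$). Reading these invariants off each of the six elements $x_1^n,y_1^n,(x_1y_1)^n,x_2^m,y_2^m,(x_2y_2)^m$ using the closed form above, any putative $z \in \Sigma(x_1,y_1)\cap\Sigma(x_2,y_2)\setminus\{e\}$ with nontrivial $\pi$-image forces congruences of the shape $m(s\mp 1)\equiv 0\pmod r$ between matching pairs of powers. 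Choosing $s$ so that $s\mp 1$ has $p$-adic valuation as small as possible — $s=2$ when $p\ge 5$, and e.g.\ $s=4$ when $p=3$ — collapses $m$ to a small residual set of exponents; these, together with the case $\pi(z)=1$ in which $z$ must come from powers of $x_1y_1$ and of $x_2y_2$, are eliminated by comparing the support patterns $\{0,1\}$ and $\{0,s\}$ in $B/\Delta$ under the cyclic-shift action of $t$. Provided $s\not\equiv \pm 1\pmod r$ — which is where the hypothesis that $r$ be sufficiently large is used — these patterns lie in disjoint $t$-orbits; an analogous point-count argument using the trace pins down the residual cases once $q$ is sufficiently large, yielding $\Sigma(x_1,y_1)\cap\Sigma(x_2,y_2)=\{e\}$ and completing the proof.
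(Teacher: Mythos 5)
Your overall architecture is sound and genuinely different from the paper's: you work directly in $G=W/\Delta$ with an explicitly defined involutory automorphism rather than lifting via the paper's Lemma~2.1 and a trace computation in a $(qr+2)$-dimensional representation, and for $p\ge 5$ your choice of pairs $\{at,at^{-1}\}$, $\{at^{2},at^{-2}\}$ does go through (the mod-$p$ congruences $n'\equiv 2m'$, $n'\equiv -2m'$, $n'\equiv m'$ force $m'\equiv 0$ since $2\not\equiv\pm1\bmod p$). However, there is a genuine gap at $p=3$, and it is fatal to the proposal as written. Your second pair has the form $\{at^{s},at^{-s}\}$, so the only invariant separating $x_2^m$ from $x_1^n$ is the $C_r$-projection scaled by $s$; the coordinate-sum ``trace'' is $na_0$ for \emph{all four} of $x_1^n,y_1^n,x_2^n,y_2^n$ and distinguishes nothing between the two triples. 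Since every $s$ coprime to $3$ satisfies $s\equiv\pm1\pmod 3$, one of the two cross-comparisons always degenerates. Concretely, take $r=3^k$, $s\equiv 1\pmod 3$ (e.g.\ your $s=4$) and $n=3^{k-1}$. Then $x_1^{n}=(u,t^{3^{k-1}})$ and $x_2^{n}=(u',t^{s3^{k-1}})=(u',t^{3^{k-1}})$, where $u=a_0\sum_{i<3^{k-1}}e_i$ and $u'=a_0\sum_{i<3^{k-1}}e_{si}$. Conjugating $(v,t^{k'})$ by $(w,t^l)$ gives $(t^lv+(1-t^{k'})w,\,t^{k'})$, and $(1-t^{3^{k-1}})B$ is exactly the set of vectors summing to zero on each residue class modulo $3^{k-1}$; both $u$ and $u'$ have sum $a_0$ on every such class (each class meets each index set exactly once), so $u-u'\in(1-t^{3^{k-1}})B$ and $x_1^{n}$, $x_2^{n}$ are already conjugate in $W$, while $x_1^{n}\notin\Delta$ because its $t$-part is nontrivial. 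Hence $\Sigma(x_1,y_1)\cap\Sigma(x_2,y_2)$ contains a non-central element for every $q$ and $r$. If instead $s\equiv-1\pmod 3$ the same element is conjugate to $y_2^{3^{k-1}}$.

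The underlying misstep is that your ``support pattern'' argument only controls conjugacy for elements lying in the base group $B$ (where conjugation acts by rotation plus the $\Delta$-ambiguity); for elements with nontrivial $t$-part $t^{k'}$ the class absorbs all of $(1-t^{k'})B$, which is far larger, and the residual exponents $m$ with $3\mid m$ that your congruences leave over are precisely where genuine coincidences occur. This is exactly why the paper's second pair is not of the shape $\{at^{s},at^{-s}\}$: it takes $x^yxx^{y^{-1}}$ (in $B$, coordinate sum $3a_0$) and $xyx$ (same $t$-part as $y$ but coordinate sum $2a_0$), so the two triples are separated by the $C_q$-valued sum invariant --- where the multipliers $1,2,3,5$ can be kept pairwise incongruent --- rather than by rescaling the $C_r$-part, which is impossible modulo $3$. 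To repair your argument you would need to modify the second pair so that its $B$-coordinate sums differ from $m a_0$, e.g.\ by replacing $a$ with a product of several conjugates of $a$ as the paper does.
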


Whilst this document was in preparation, in \cite{Gul2} G\"{u}l gave the first construction of infinite families of strongly real Beauville $p$-groups for $p$ odd. Our result relies on a more general construction and unlike the one given in \cite{Gul2} there are infinitely many orders of $p$-groups for which our construction gives multiple groups of the same order. For example when $(q,r)=(3^{28},3^3)$ or $(q,r)=(3^3,3^5)$ we obtain groups of order $3^{731}$ which cannot be isomorphic since they have centers of different orders.

%This more than solves the problem first posed by the author in \cite[Probelm 2]{FProc}.

For general surveys on these and related matters see \cite{BSurvey,FSurvey,JSurvey,S,WSurvey}.

%In Section 2 we prove Theorem \ref{MainThm} for odd primes before discussing the differences to the case $p=2$ in Section 3. We give concluding remarks in Section 4.

\section{Proof of the Main Theorem}

\subsection{A General Lemma}

The following general lemma is straightforward to prove but useful.

\begin{lemma}\label{MainLem}
Let $G$ be a finite group; let $Z\leq G$ be a characteristic subgroup; let $t\in Aut(G)$ and let $x_1,y_1,x_2,y_2\in G$ have the properties that
\begin{equation}
\Sigma(x_1,y_1)\cap\Sigma(x_2,y_2)\subseteq Z,\tag{$\dagger\dagger$}
\end{equation}
$\langle x_1,y_1\rangle=\langle x_2,y_2\rangle=G$ and
\begin{equation}
x_i^t=x_i^{-1}\mbox{ and }y_i^t=y_i^{-1}\mbox{ for }i=1,2.\tag{$\star\star$}
\end{equation}
Then $G/Z$ is a strongly real Beauville group.
\end{lemma}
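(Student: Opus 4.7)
The natural plan is to push everything down through the quotient map $\pi:G\to G/Z$ and show that, setting $\bar{x}_i := \pi(x_i)$ and $\bar{y}_i := \pi(y_i)$, the pair $\{\{\bar{x}_1,\bar{y}_1\},\{\bar{x}_2,\bar{y}_2\}\}$ is a strongly real Beauville structure for $G/Z$. Generation is immediate: the surjectivity of $\pi$ combined with $\langle x_i,y_i\rangle = G$ gives $\langle\bar{x}_i,\bar{y}_i\rangle = G/Z$ for $i=1,2$.

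For the strongly real condition, the hypothesis that $Z$ is characteristic is exactly what is needed to see that $t$ descends to an automorphism $\bar{t}\in\mbox{Aut}(G/Z)$. Reducing $(\star\star)$ modulo $Z$ yields $\bar{t}(\bar{x}_i) = \bar{x}_i^{-1}$ and $\bar{t}(\bar{y}_i) = \bar{y}_i^{-1}$, so taking $\phi := \bar{t}$ and $g_1 = g_2 := \bar{e}$ in $G/Z$ makes $(\star)$ hold trivially, the inner conjugations playing no role.

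This leaves the Beauville condition $(\dagger)$ in $G/Z$, which I expect to be the only step needing real care. Since powers and conjugation commute with $\pi$, we have the description $\Sigma(\bar{x}_i,\bar{y}_i) = \pi(\Sigma(x_i,y_i))$. Any $\bar{\alpha}$ in the intersection $\Sigma(\bar{x}_1,\bar{y}_1)\cap\Sigma(\bar{x}_2,\bar{y}_2)$ therefore admits representatives $\alpha_1\in\Sigma(x_1,y_1)$ and $\alpha_2\in\Sigma(x_2,y_2)$ lying in a common $Z$-coset, and the plan is to use $(\dagger\dagger)$ to argue that this coset must be $Z$ itself, forcing $\bar{\alpha} = \bar{e}$. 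The hard part is precisely this bookkeeping: in general $\alpha_1\neq\alpha_2$, so one has to verify that the inclusion $\Sigma(x_1,y_1)\cap\Sigma(x_2,y_2)\subseteq Z$ genuinely controls the intersection of the $\pi$-images and not merely the set-theoretic intersection of the $\Sigma$'s inside $G$.
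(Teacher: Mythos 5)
Your handling of generation and of the strongly real condition is correct and coincides with the paper's: images of generating pairs generate, the hypothesis that $Z$ is characteristic is what lets $t$ descend to $\bar t\in\mathrm{Aut}(G/Z)$, and reducing $(\star\star)$ modulo $Z$ gives $(\star)$ with $g_1=g_2=e$. The problem is the step you yourself flag and then leave open: you never actually derive $\Sigma(\bar x_1,\bar y_1)\cap\Sigma(\bar x_2,\bar y_2)=\{\bar e\}$ from $(\dagger\dagger)$, so as written this is a sketch whose central step is missing. Moreover the worry you raise is genuine, not bookkeeping. Since $\Sigma(\bar x_i,\bar y_i)=\pi(\Sigma(x_i,y_i))$, the intersection of the images equals $\pi\bigl(\Sigma(x_1,y_1)Z\cap\Sigma(x_2,y_2)Z\bigr)$, and for subsets $A,B\subseteq G$ the inclusion $A\cap B\subseteq Z$ does not in general imply $AZ\cap BZ\subseteq Z$: nothing prevents $\alpha_1\in\Sigma(x_1,y_1)$ and $\alpha_2\in\Sigma(x_2,y_2)$ with $\alpha_1=\alpha_2 z$ for some $z\in Z$ while neither $\alpha_i$ lies in $Z$, because the sets $\Sigma(x_i,y_i)$ are unions of conjugacy classes but not unions of $Z$-cosets. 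Condition $(\dagger\dagger)$ simply does not speak to that situation.

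You should also know that the paper's own proof does not close this gap: it asserts in a single line that $\Sigma(x_1Z,y_1Z)\cap\Sigma(x_2Z,y_2Z)=\{e\}$ follows from the hypotheses (and even attributes this to condition $(\star\star)$ rather than $(\dagger\dagger)$). So you have identified a real weakness in the lemma as stated rather than overlooked an available argument. The clean repair is to strengthen the hypothesis to its saturated form, $\Sigma(x_1,y_1)Z\cap\Sigma(x_2,y_2)Z\subseteq Z$ --- equivalently: whenever a conjugate of a power of $x_1$, $y_1$ or $x_1y_1$ and a conjugate of a power of $x_2$, $y_2$ or $x_2y_2$ lie in the same $Z$-coset, both lie in $Z$. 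With that hypothesis your argument closes immediately; and in any application (such as the paper's, where $Z$ is the centre and the check is done via traces) it is this coset version, not the bare intersection, that must be verified.
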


\begin{proof}
By hypothesis $\langle x_i,y_i\rangle=G$ for $i=1,2$ and by condition ($\star\star$) we have that $\Sigma(x_1Z,y_1Z)\cap\Sigma(x_2Z,y_2Z)=\{e\}$. Now $t$ induces an automorphism of $G/Z$ that can be used to define a strongly real Beauville structure since for $i=1,2$

\begin{tabular}{rclr}
$(x_iZ)^t$&=&$x_i^tZ^t$&\\
&=&$x_i^tZ$&[since $Z$ is a characteristic subgroup, by hypothesis]\\
&=&$x_i^{-1}Z$&[since $x_i^t=x_i^{-1}$, by hypothesis]\\
&=&$(x_iZ)^{-1}$&
\end{tabular}

\noindent and similarly $(y_iZ)^t=(y_iZ)^{-1}$. It follows that $\{\{x_1Z,y_1Z\},\{x_2Z,y_2Z\}\}\subset G/Z$ gives a strongly real Beauville structure since homomorphic images of generating sets are generating sets.
\end{proof}

\subsection{The Groups}\label{TheGroups}

By Lemma \ref{MainLem} the problem of showing that $G/Z$ is a strongly real Beauville group can be lifted to the potentially much simpler task of working inside the group $G$ instead.

Let $p$ be a prime and let $q$ and $r$ be powers of $p$. We will consider the wreath product $C_q\wr C_r$. Intuitively, this is a natural class of groups to consider since having a large abelian subgroup, the subgroup isomorphic to $C_q^r$, ensures that most elements have a large centralizer and so conjugacy classes are small thus making it easier to satisfy condition ($\dagger$) of Definition \ref{MainDef}. Unfortunately, wreath products like these can never be Beauville groups --- for any generating pair $\{g,h\}$ we have that $\Sigma(g,h)$ contains the (non-trivial) center of the group making it impossible to satisfy condition ($\dagger$) of Definition \ref{MainDef}. The above lemma, however, enables us to work around this problem.

To give explicit names to elements of these groups we will define our copy of $C_q\wr C_r$ by the following presentation
\[
\bigg\langle x,y\,\bigg|\,x^q,y^r,[x,x^{y^i}]\mbox{ for }i=1,\ldots,\frac{r-1}{2}\bigg\rangle.
\]

\subsection{A Representation}\label{RepSec}

To help show that condition ($\star\star$) of Lemma \ref{MainLem} is satisfied by the elements we will be considering we will calculate the traces of matrices representing the elements of our groups. To do this we first consider a permutation representation on the points $\{1,\ldots,qr\}$.

For the element $x$ we take the permutation
\[
\Xi:=\bigg(\frac{r+1}{2},\frac{r+1}{2}+r,\frac{r+1}{2}+2r,\ldots,\frac{r+1}{2}+(q-1)r\bigg)
\]
and for the element $y$ we take the permutation
\[
\Upsilon:=(1,2,\ldots,r)(r+1,r+2,\ldots,2r)\cdots(qr-r+1,qr-r+2,\ldots,qr)
\]
so that $\Xi$ cyclicly permutes the midpoints of the cycles defining $\Upsilon$.

To construct the element $\phi$ in Definition \ref{MainDef} we consider the involution
\[
t:=\bigg(1,qr\bigg)\bigg(2,qr-1\bigg)\cdots\bigg(\frac{qr+1}{2}-1,\frac{qr+1}{2}+1\bigg).
\]
Direct calculation shows that that given the $x$ and $y$ above we have that $x^t=x^{-1}$ and $y^t=y^{-1}$. Moreover, $t$ is an automorphism of the group since it simply sends each of the defining relations in the above presentation to to one that is immediately implied by them.

Next we construct a degree $qr+2$ representation of these elements which is denoted in Figure 1. This representation is given as $W\oplus V_1\oplus V_2$ where $W$ is acted on by the permutation matrices defined by $\Xi$ and $\Upsilon$ whilst $V_1$ and $V_2$ are linear representations included to give the matrices traces that will distinguish the various elements we are interested in.

\begin{figure}
\[
x\mapsto X:=\left(\begin{array}{c|c|c}
\Xi&&\\
\hline
&\zeta_q&\\
\hline
&&1
\end{array}\right)\mbox{, }
y\mapsto Y:=\left(\begin{array}{c|c|c}
\Upsilon&&\\
\hline
&1&\\
\hline
&&\zeta_r
\end{array}\right)
\]
\label{MainFig}\caption{The representation. Here we write permutations to denote their corresponding permutation matrix; 1 to denote the $1\times1$ identity matrix and $\zeta_n$ denotes a primitive $n^{th}$ root of unity.}
\end{figure}

\subsection{The Beauville Structure}

In this subsection we finally give the Beauville structure that proves Theorem \ref{MainThm}.% when $p$ is odd.

We claim that the elements $x$, $y$, $x^yxx^{y^{-1}}$ and $xyx$ satisfy the hypotheses of Lemma \ref{MainLem} and thus $C_q\wr C_r/Z(C_q\wr C_r)$ is a strongly real Beauville group since the center of a group is always a characteristic subgroup.

We saw in Section \ref{TheGroups} that the automorphism $\phi$ defined by conjugation by the element $t$ satisfies condition ($\star\star$).

Writing $Tr(A)$ to denote the trace of the matrix $A$ we note that for non-identity powers of the matrices
\[
Tr(X^i)=qr-q+1+\zeta_q^i\mbox{, }Tr(Y^i)=1+\zeta_r^i\mbox{ and }Tr((XY)^i)=\zeta_q^i+\zeta_r^i
\]
and
\[
Tr((X^YXX^{Y^{-1}})^i)=qr-3q+1+\zeta_q^{3i}\mbox{, }Tr((XYX)^i)=\zeta_q^{2i}+\zeta_r^i
\]
\[
\mbox{ and }Tr((X^YXX^{Y^{-1}}XYX)^i)=\zeta_q^{5i}+\zeta_r^i.
\]
Since all the non-central powers of these all have distinct traces it follows that no non-trivial power of $x$, $y$ and $xy$ can be conjugate to any non-trivial power of $xyx$, $x^yxx^{y^{-1}}$ and $xyxx^yxx^{y^{-1}}$, aside from the powers of them lie in the center. It follows that these elements satisfy condition ($\dagger\dagger$) of Lemma \ref{MainLem}.

Finally it only remains to prove that our elements generate the whole group. We defined $x$ and $y$ so that $\langle x,y\rangle=C_q\wr C_r$ so it only remains to show that $xyx$ and $x^yxx^{y^{-1}}$ together generate the group. To do this note that it is sufficient to express $x$ as a word in $xyx$ and $x^yxx^{y^{-1}}$ since once we have $x$ we have that
\[
x^{-1}(xyx)x^{-1}=(x^{-1}x)y(xx^{-1})=y.
\]
First suppose that $p\not=3$. Note that $x^yxx^{y^{-1}}((x^yxx^{y^{-1}})^{-1})^{xyx}=x^{y^{-1}}x^{-y^2}$ and conjugating this by $xyx$ gives us $xx^{-y^3}$. Conjugating this by $(xyx)^3$ gives us $x^{y^3}x^{-y^6}$ and so we have $xx^{-y^3}(x^{y^3}x^{-y^6})^{-1}=xx^{-y^6}$. Since 3 is coprime to the order of $y$ we can easily repeat this to obtain $xx^{-y^i}$ for any $i$. In particular we have the elements $xx^{-y}$ and $xx^{-y^{-1}}$ and so
$(x^yxx^{y^{-1}})(xx^{-y})(xx^{-y^{-1}})=x^3$. Since 3 is coprime to the order of $x$ we can power this up to finally obtain $x$.

Note that if $r=3$ the above will not work since $x^yxx^{y^{-1}}$ generates the center of the group in this case and so $\langle xyx, x^yxx^{y^{-1}}\rangle$ is abelian and thus not the whole group. We thus insist that $r>3$ when $p=3$. (It is natural to consider such a restriction since the group $C_3\wr C_3$ is known to be too small to be a Beauville group, let alone a strongly real one and so the same is obviously true of any quotient of it --- see \cite[Corollary 1.9]{BBF}) In this case it turns out to more convenient to use $x^{y^2}x^yxx^{y^{-1}}x^{y^{-2}}$ instead of $x^yxx^{y^{-1}}$. Whilst the trace calculations for this element will clearly be a little different in this case, they nonetheless show that the only powers of this element and its product with $xyx$ that are conjugate to any of $x$, $y$ or $xy$ lie in the center so it is a perfectly valid candidate. Moreover it is also clearly inverted by our automorphism. To prove that this pair generates an argument entirely analogous to that of the previous paragraph can be carried out going via the elements $xx^{-y^5}$ which works since 5 is coprime to 3.

\section{Concluding Remarks}

Here we have explicitly constructed some families of strongly real Beauville $p$-groups, however it would be of interest to have a general test for being a strongly real Beauville $p$-group, akin to the general conditions for a $p$-group being a Beauville group given by Fern\'{a}ndez-Alcober and G\"{u}l in \cite{new} or by Jones and Wolfart in \cite[Theorem 11.2]{JW}.

We also remark that the special case of $q=r=p\geq5$ was first shown to be Beauville groups that are not necessarily strongly real by Jones and Wolfart in \cite[Exercise 11.1]{JW}.

Whilst,hilst infinitely many strongly real Beauville $p$-groups are now known to exist for each odd prime it would be interesting to know how frequently they occur.

\begin{q}
\begin{enumerate}
\item[(a)] How does the proportion of Beauville groups of order $p^n$ that are strongly real vary as $n$ increases?
\item[(b)] How does the proportion of Beauville groups of order $p^n$ that are strongly real vary as $p$ increases?
\end{enumerate}
\end{q}

Finally a related question about the proportion of $p$-groups that are Beauville are posed in \cite[Question 1.8]{BBF}.

An even more obvious and pressing problem is the following:

\begin{p}
Construct infinitely many strongly real Beauville 2-groups.

\end{p}

\end{document}